\definecolor{blue}{RGB}{0,0,255}
\definecolor{green}{RGB}{50,150,50}
\definecolor{red}{RGB}{255,0,0}
\definecolor{grey}{RGB}{197,77,87}
\definecolor{darkblue}{rgb}{0.0, 0.0, 0.55}
\renewcommand{\emph}[1]{{\color{darkblue}\em #1}}
\newtheorem{theorem}{Theorem}
  \newtheorem{lemma}[theorem]{Lemma}
  \newtheorem{proposition}[theorem]{Proposition}
\newtheorem{algorithm}[theorem]{Algorithm}
  \theoremstyle{definition}
  \newtheorem{example}[theorem]{Example}
   \newtheorem{remark}[theorem]{Remark}
\newcommand{\VV}{\mathbb V}
\newcommand{\CC}{\mathbb C}
\newcommand{\QQ}{\mathbb Q}
\newcommand{\ZZ}{\mathbb Z}
\newcommand{\bL}{{\bm L}}
\newcommand{\bff}{\bm f}
\newcommand{\bI}{\bm I}
\newcommand{\ff}{\bm f}
\newcommand{\degXL}[2]{\ensuremath{d(#1,#2)}}
\newcommand{\inrange}[1]{\ensuremath{ = 1, \dots, #1}}
\DeclareMathOperator{\init}{in}
\DeclareMathOperator{\tinit}{t-in}
\DeclareMathOperator{\trop}{trop}
\DeclareMathOperator{\Span}{span}
\begin{document}

\title[]{Beyond Polyhedral Homotopies}
  \author{Anton Leykin}
 \address{School of Mathematics, Georgia Tech,
        Atlanta GA, USA}
\email {[leykin,jyu]@math.gatech.edu}
\thanks{AL is supported by NSF-DMS grant \#1151297}

  \author{Josephine Yu}
\thanks {JY is supported by NSF-DMS grant \#1600569.}
\date{\today}

\begin{abstract}
We present a new algorithmic framework which utilizes tropical geometry and homotopy continuation for solving systems of polynomial equations where some of the polynomials are generic elements in linear subspaces of the polynomial ring. 
 This approach generalizes the polyhedral homotopies by Huber and Sturmfels.  
\end{abstract}

 \maketitle


\section{Introduction}

The \emph{polyhedral homotopy} continuation method of Huber and Sturmfels~\cite{HuberSturmfels}, which is implemented in PHCpack~\cite{V99}, HOM4PS~\cite{HOM4PSwww,chen2014hom4ps}, and PHoM~\cite{gunji2004phom}, is used for computing numerical approximations of all the roots of $n$ polynomial equations in $n$ variables, where it is assumed that each equation has generic coefficients with respect to its monomial support.
We generalize this to
\begin{description}[leftmargin=*]
\item[Setting A] Instead of solving for all solutions in $(\CC^*)^n$, we would like to find solutions lying on a variety $X \subset \CC^n$ defined by the set of polynomials $G$, away from the base locus, where the number of additional given polynomials (other than $G$) is equal to $\dim(X)$ and each of the additional polynomials is generic with respect to its monomial support.
\end{description}
This can be further generalized to
\begin{description}[leftmargin=*]
\item[Setting B] Instead of monomial supports, we can consider arbitrary supports, i.e.\ we would like to find roots on $X$ of polynomials that are generic linear combinations of arbitrary sets
 of polynomials.
\end{description}
We propose a 3-stage framework to solve this problem in the general setting.
\begin{description}[leftmargin=*]
\item[Initialization] Reformulate the problem in Setting B into Setting A as explained in \S\ref{sec:setup}. Then pick---in practice, randomly---a one-parameter system $\bff(t)$ on $\dim(X)$ many polynomials with the specified monomial supports and coefficients that are generic rational powers of the parameter $t$ multiplied by a generic complex number.
\item[Stage 1 (mostly symbolic)] Compute the tropical variety $\trop(X)$.  This usually involves both polyhedral computations and Gr\"obner basis computations.
\item[Stage 2 (polyhedral)] Compute the (transverse) intersection of $\trop(X)$ with  tropical hypersurfaces of polynomials in $\bff(t)$.  
\item[Stage 3 (mostly numerical)] Find the initial terms---in general, distinct truncations---of Puiseux series solutions of the system $G =\bff(t) = 0$ corresponding to the tropical points found in Stage 2.
Track the homotopy paths for $t\in[0,1]$, which are approximated by these truncated Puiseux series in the beginning ($t$ close to $0$) and lead to solutions of a system with generic complex coefficients at the end ($t=1$).
\end{description}
When $X = \CC^n$ in Setting A, the Stage 1 is not needed, and the framework specializes to the polyhedral homotopy approach. 
The precise statements are in the pseudocode of Algorithm~\ref{alg:main}, which  is followed by remarks on currently available tools for implementation.

\section{The problem setup}\label{sec:setup}

The more general setup (Setting B) is as follows.
Let $X = \VV(G) \subseteq \CC^n$.  Let $L_1,L_2,\dots,L_r$ be vector subspaces of $\CC[x]$ spanned by finite sets $F_1,F_2,\dots,F_r$ respectively. Let $\bL := L_1 \times \cdots \times L_r$. Let $Z_{L_i} := \VV(F_i)$ be the \emph{base locus} of the linear spaces $L_i$.  Let $Z_\bL := \bigcup_{i=1}^r Z_{L_i}$.

\smallskip

Our {\bf main goal} is to compute all the points in \mbox{$(X\setminus Z_\bL) \cap \VV(\ff)$} for some generic element $\ff=(f_1,\dots,f_r) \in \bL$.  Our {\bf enumerative goal} is to compute the number $\degXL{X}{\bL}$ of such points, which we assume to be finite.

A discussion of subtleties surrounding the base locus and genericity appears in the Appendix.
We can deal with rational functions in $F_i$'s by clearing the denominators and removing the zero locus of the the denominators from $X\backslash Z_\bL$.

We will now reformulate the Setting B into Setting A.
Let $P = \{h_1,h_2,\dots,h_\ell\}$ be the set of non-monomials in $F_1 \cup \cdots \cup F_r$.  Consider
\begin{equation}\label{eq:replace}
\begin{array}{rcl}
G' &=& G \cup \{z_i - h_i(x)  \mid 1 \leq i \leq \ell \},\\ 
F'_j &=& (\text{$F_j$ with $h_i$ replaced by $z_i$}),\ j\inrange{r},
\end{array}
\end{equation}
which are sets of polynomials in $\CC[x,z]:=\CC[x_1,\dots,x_n, z_1,\dots,z_\ell]$.
Solving the system $f_1=\cdots=f_r=0$ on $X = \VV(G)$ is equivalent to solving $f_1' = \cdots =f_r' = 0$ on the variety $\VV(G')$ where $f_j'$ is obtained from $f_j$ by replacing each $h_i$ with $z_i$.  
The new polynomials $f'_1,\dots,f'_r$ are generic with respect to their monomial support. For the rest of the paper we assume Setting A, that is, $F_i$ forms a {\bf monomial basis} of $L_i$ for each $i\inrange{r}$.  We drop the primes~$'$ for simpler notation.



\begin{example}\label{example:two-circles}
Two generic circles 
\begin{eqnarray*}
\label{equation:two-circles}
a_1(x^2+y^2) + a_2x + a_3 y + a_4 &=& 0\\
a_5(x^2+y^2) + a_6x + a_7 y + a_8 &=& 0
\end{eqnarray*}
intersect in two points in $\CC^2$ although the mixed volume of their Newton polytopes is $4$.  
We rewrite the system as:
\begin{eqnarray*}
\label{equation:two-circles}
z - (x^2+y^2) &=& 0\\
a_1 z + a_2x + a_3 y + a_4 &=& 0\\
a_5 z + a_6x + a_7 y + a_8 &=& 0
\end{eqnarray*}
The original equations are transformed into polynomials that are generic with respect to their monomial supports, but we acquire a new equation whose coefficients may be special.
\qed
\end{example}

To achieve the main goal, we will construct a homotopy with exactly $\degXL{X}{\bL}$ paths to track.  
The number $\degXL{X}{\bL}$ is called the \emph{intersection index} $[L_1,\dots,L_r]$ in~\cite{KavehKhovanskii} and equals the mixed volume of \emph{Newton-Okounkov bodies} associated to $L_1, \dots, L_r$ on $X$.

\section{Algorithmic framework}

Let $\CC\{\!\{t\}\!\}$ be the field of Puiseux series that are convergent on a punctured neighborhood of $0$ in $\CC$.  (See~\cite{Ghys-promenade-book} for a proof of this fact and a historical excursion.)  There is a valuation from $\CC\{\!\{t\}\!\}-\{0\}$ to $\QQ$ given by the leading (lowest) degree.

For an ideal $I$ in $\CC\{\!\{t\}\!\}[x_1,\dots,x_n]$ and a weight vector $\omega \in \QQ^n$, the \emph{$t$-initial ideal} $\tinit_\omega(I)$ is obtained by first taking the usual initial ideal with the min-convention (leading terms are lightest), where the weight of $t$ is $1$ and the weights of $x$'s are given by $\omega$, and then setting $t=1$.  
The $t$-initial ideal is an ideal in $\CC[x]$; it does not involve $t$. For example, $\tinit_{(1,2)}\langle (t+t^2)x+2y+3tx^2+(5t^2+7t^3)\rangle = \langle x+2y+5
\rangle$.

The tropical variety of $I$ is defined as
\[
\trop(I) = \{\omega \in \QQ^n : \tinit_\omega(I) \text{ does not contain a monomial}\}.
\]  
We often write $\trop(X)$ to denote $\trop(I)$ when $X = \VV(I)$, and we write $\trop(f)$ to denote $\trop(\langle f \rangle)$.
The tropical variety is a polyhedral complex, and we can define \emph{multiplicities} on its maximal faces.  See~\cite[Chapter 3]{MaclaganSturmfels} and~\cite{JMM} for details.

\begin{theorem}[Fundamental Theorem of Tropical Algebraic Geometry]\cite[Theorem~3.2.3]{MaclaganSturmfels}
\label{thm:fundThm}
The points in  $\trop(I)$ are exactly the coordinatewise valuations of the Puiseux series points in the variety of $I$.  The multiplicity of each tropical point $\omega \in \trop(I)$ is equal to the number of Puiseux series point with valuation~$\omega$, counted with multiplicities.
\end{theorem}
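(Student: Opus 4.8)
The statement has two parts: the set-theoretic identity that $\trop(I)$ equals the image of $\VV(I)$ under coordinatewise valuation, and the refinement that the multiplicity at $\omega$ counts, with multiplicity, the Puiseux points of $\VV(I)$ over $\omega$. Throughout I would tacitly restrict to points of $\VV(I)$ all of whose coordinates are nonzero --- the standard torus setting --- so that valuations are defined. My plan: (i) prove $\val(\VV(I)) \subseteq \trop(I)$ by a term-cancellation argument; (ii) prove the reverse inclusion by reducing to $\omega = 0$ and then to a zero-dimensional lifting statement; (iii) deduce the multiplicity count by the same reduction to dimension zero.

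For (i), I would take $p \in \VV(I)$ in the torus, set $\omega := \val(p) \in \QQ^n$, and let $\bar p \in (\CC^*)^n$ be its vector of leading coefficients. The key computation is that for every $f \in I$ the coefficient of the lowest power of $t$ in $f(p)$ equals $\tinit_\omega(f)$ evaluated at $\bar p$; since $f(p) = 0$, this value vanishes. Hence $\bar p \in \VV(\tinit_\omega(f))$ for all $f \in I$, so $\bar p \in \VV(\tinit_\omega(I))$, and since $\bar p$ lies in the torus the ideal $\tinit_\omega(I)$ contains no monomial, i.e.\ $\omega \in \trop(I)$. As the value group $\QQ$ is dense and $\trop(I)$ is a rational polyhedral complex, no closure operation is needed here.

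For (ii), I would fix $\omega \in \trop(I)$, rescale $t$ to assume $\omega \in \ZZ^n$, and apply the monomial automorphism $x_i \mapsto t^{\omega_i} x_i$ of $\CC\{\!\{t\}\!\}[x_1^{\pm},\dots,x_n^{\pm}]$ --- which preserves the torus and shifts coordinatewise valuations by $\omega$ --- to reduce to $\omega = 0$. Then $\tinit_0(I)$ is, by the definition of $\trop$, a monomial-free ideal in $\CC[x]$, so by the weak Nullstellensatz its variety meets $(\CC^*)^n$ at some point $\bar a$. Choosing $d = \dim \VV(I)$ generic affine-linear forms $\ell_1,\dots,\ell_d$ with constant coefficients and setting $I' := I + \langle \ell_1 - \ell_1(\bar a),\dots,\ell_d - \ell_d(\bar a)\rangle$ cuts $\VV(I)$ down to a finite scheme; a genericity argument --- generic affine-linear sections commute with $\tinit_0$ --- shows $\bar a \in \VV(\tinit_0(I'))$, whence $0 \in \trop(I')$. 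It then remains to settle the zero-dimensional case, where $\CC\{\!\{t\}\!\}[x]/I'$ is finite over $\CC\{\!\{t\}\!\}$ and I would argue --- via standard (Gr\"obner) bases over the valuation ring $\CC[[t]]$ --- that the torus points of $\VV(\tinit_0(I'))$ are exactly the reductions of the Puiseux solutions of $\VV(I')$ of valuation $0$; in particular $\VV(I')$ has such a solution $p$, and $p \in \VV(I') \subseteq \VV(I)$ with $\val(p) = 0$. I expect the two ingredients that need real work to be the commutation of generic slicing with $\tinit_0$ and the zero-dimensional lifting itself --- this is the technical heart of the theorem.

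For (iii), recalling that the tropical multiplicity of a maximal cell is, for generic $\omega$ in that cell, computed by a generic affine-linear slice of complementary dimension, the same reduction as in (ii) brings the multiplicity assertion to dimension zero, where the multiplicity at $\omega$ is the length $\dim_\CC \CC[x]/\bigl(\tinit_\omega(I) : (x_1 \cdots x_n)^\infty\bigr)$ of the part of the initial degeneration supported on the torus. I would then match this with the number of Puiseux points of $\VV(I)$ over $\omega$, counted with multiplicity: after the coordinate change reducing $\omega$ to $0$, spread $\VV(I)$ out to a flat family over $\CC[[t]]$ whose special fiber is $\VV(\tinit_0(I))$, so that total lengths agree, and then check that each torus point of the special fiber lifts to exactly its own multiplicity's worth of Puiseux points, using the zero-dimensional lifting of (ii) together with a Hensel / idempotent-splitting argument. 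Summing over the finitely many tropical points in the sliced configuration then gives the claim, and hence the theorem. The main obstacle throughout is the lifting statement, in both its set-theoretic and its quantitative form: part (i) and all the reductions are essentially formal bookkeeping, whereas producing an honest Puiseux solution lying over a prescribed tropical point --- and counting such solutions correctly --- is where all the genuine content, and the need for Gr\"obner theory over the valuation ring, resides.
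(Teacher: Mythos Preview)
The paper does not prove this theorem: it is quoted verbatim with a citation to \cite[Theorem~3.2.3]{MaclaganSturmfels} and used as a black box, so there is no in-paper argument to compare against. Your outline is essentially the standard proof found in that reference --- the easy direction via term cancellation, the reduction to $\omega = 0$ by a monomial change of coordinates, the reduction to dimension zero by generic affine slicing, and the lifting/flat-degeneration argument for both existence and multiplicity --- and you have correctly identified the zero-dimensional lifting (and its compatibility with multiplicities) as the only place where real work is required. Nothing more is expected here; in the context of this paper the theorem is meant to be cited, not reproved.
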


We will now formulate the the polyhedral homotopy continuation method of Huber and Sturmfels using tropical geometry.  
Given a polynomial system $\ff = (f_1,\dots,f_n) \in (\CC[x_1,\dots,x_n])^n$ with generic coefficients with respect to their monomial supports, we perturb the coefficients by throwing in extra factors of the form $t^\alpha$ where $\alpha$ are arbitrary rational numbers, to obtain a family of systems $\ff(t) = (f_1(t), \dots, f_n(t)) \in (\CC\{\!\{t\}\!\}[x_1,\dots,x_n])^n$.  See (\ref{eq:f(t)}) with $r=n$.
The homotopy continuation approach looks to ``connect'' solutions of the original system $\ff=\ff(1)$ to the Puiseux series solutions of $\ff(t)$ convergent in some neighborhood of $t=0$.

If the exponents of $t$'s are sufficiently generic, then the intersection of tropical hypersurfaces $\trop({f_1(t)}) \cap  \cdots \cap \trop({f_n(t)})$ is finite and transverse, i.e.\ locally at each intersection point it is a transverse intersection of affine spaces.  In this case we have 
\[
\trop(\langle {f_1(t)}, \dots, {f_n(t)} \rangle) = \trop({f_1(t)}) \cap  \cdots \cap \trop({f_n(t)}).
\]

The points of $\trop({f_1(t)}) \cap  \cdots \cap \trop({f_n(t)})$ are most commonly computed by enumerating \emph{mixed cells}  of \emph{the mixed subdivision}, which is the projection of lower convex hull the Newton polyhedron of the product $f_1(t)\cdots f_n(t)$ onto the $x$-coordinates.

The solutions $c$ of the binomial initial system $\tinit_\omega f_1(t) = \cdots = \tinit_\omega f_n(t) = 0$ give us the leading terms $ct^\omega$ of the convergent Puiseux series with valuation $\omega$ satisfying $\ff(t) = 0$. 
Take $c\varepsilon^\omega$ as a numerical approximation of a point satisfying $\ff(\varepsilon)=0$ for a small $\varepsilon>0$. We can numerically track a segment of a real curve $\ff(t)$, $t\in [\varepsilon,1]$ starting at that point and finishing with an approximation of a solution of to the original system $\ff=\ff(1)$. 
This is called the \emph{polyhedral homotopy} because mixed subdivisions of Newton polyhedra play a crucial role.

\smallskip

Getting back to our Setting A, polynomials in the set $G$, e.g.\ $z - (x^2+y^2)$ in Example~\ref{example:two-circles} above,  can have special coefficients, while the others have generic coefficients  with respect to fixed \emph{monomial} supports.  As above, we wish to compute the tropical variety of the system, which should consist of finitely many points.  

For each $i\inrange{r}$, let $f_i = \sum_{x^\alpha\in F_i}a_{i,\alpha} x^{\alpha}$ be complex polynomials with generic coefficients.  Let 
\begin{equation}\label{eq:f(t)}
f_i(t) =  \sum_{x^\alpha\in F_i}a_{i,\alpha}t^{\omega_{i,\alpha}} x^{\alpha}
\end{equation}
for some generic $\omega_{\bullet,\bullet} \in \QQ$.  We refer to the system $\ff(t)= (f_1(t),\dots,f_r(t))$ as a \emph{homotopy}. We recover the original system $\ff \in\bL$ simply by specializing $t=1$. 

We can look at $\ff(t)$ from two different angles:
\begin{enumerate}
\item Consider $\ff(t)$ as a family of systems in $\bL$, parameterized by $t \in \CC\backslash\{0\}$.  
Given a path $\gamma: [0,1]\to\CC\backslash\{0\}$, we get a path $\ff \circ \gamma : [0,1] \rightarrow \bL$.  Suppose the path in $\bL$ does not go through the branch locus of the projection $\pi: I_{X,\bL} \to \bL$ where 
\[I_{X, \bL} = \{(f_1,\dots,f_r, z) \mid z \in X \setminus Z_\bL \text{ and all } f_i \text{ vanish at }z\} \subset \bL \times X \]
is the incidence variety.  Then the homotopy $\ff(t)$ induces smooth \emph{homotopy paths} $\pi^{-1}(\ff(\gamma(\tau)))$, $\tau\in[0,1]$, that give a one-to-one correspondence between starting roots $\pi^{-1}(\ff(t_0))$ and target roots $\pi^{-1}(\ff(t_1))$, where $t_0=\gamma(0)$ and $t_1=\gamma(1)$.

Once we find an appropriate path $\gamma$ from some $t_0$, where $\pi^{-1}(\ff(t_0))$ is known, to $t_1=1$, we achieve our main goal by \emph{homotopy  continuation} along $\gamma$.

\item Consider $\ff(t)$ as a polynomial system over the \emph{Puiseux series} $\CC\{\!\{t\}\!\}$ with $\degXL{X}{\bL}$ many roots over $\CC\{\!\{t\}\!\}$.  
\end{enumerate}

The second point of view relates to the first as follows. The Puiseux series roots in (2) \emph{converge} in some punctured neighborhood of $t=0$ in $\CC$. Thus, if one can approximate these roots close to $t=0$, one can find a starting point $t_0$ for (1) along with approximations of the starting roots.  This requires approximating the Puiseux series roots, which is explained in Remark~\ref{re:truncations}.

\begin{remark}\label{re:really-lucky}
One can show that the (really) ``unlucky'' vectors of coefficients $a_{\bullet,\bullet}$ in the construction of $\ff(t)$ --- when $\ff(t)$ for some $t\in(0,1]$ intersects the ramification locus of $\pi$ --- is contained in a Zariski closed set of \emph{real} codimension 1 in the ambient (real) coefficient space (with each $a_{\bullet,\bullet}\in\CC$ contributing  two coordinates $\operatorname{Re}(a_{\bullet,\bullet})$ and $\operatorname{Im}(a_{\bullet,\bullet})$).
This implies that with generic choices of $a_{\bullet,\bullet}$, the \emph{real line segment path} running over $t\in[\varepsilon, 1]$ for a small $\varepsilon>0$ is ``lucky''. 
\qed
\end{remark}

\begin{lemma}
\label{lem:intersection}
With the notation as above,
if the coefficients $f_1(t),\dots,f_r(t)$ have sufficiently generic valuations, then 
\[
\trop \langle G \cup \{f_1(t),\dots,f_r(t) \} \rangle = \trop \langle G \rangle \cap \trop \langle f_1(t)\rangle \cap \cdots \cap \trop \langle f_r(t) \rangle.
\]
\end{lemma}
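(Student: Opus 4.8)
The plan is to establish the two inclusions separately: one is formal, and the other I would derive from a tropical Bertini-type transversality statement combined with the lifting theorem for transverse intersections of tropical varieties.

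For the easy inclusion ``$\subseteq$'', I would write $J := \langle G \cup \{f_1(t),\dots,f_r(t)\}\rangle$ and observe that $\langle G\rangle \subseteq J$ and $\langle f_i(t)\rangle \subseteq J$, so $\VV(J) \subseteq X$ and $\VV(J) \subseteq \VV(f_i(t))$ for every $i$; since tropicalization is monotone under inclusion of subvarieties, $\trop(J)$ lies inside $\trop\langle G\rangle$ and inside each $\trop\langle f_i(t)\rangle$, hence inside their intersection. (Equivalently, by Theorem~\ref{thm:fundThm}, a Puiseux-series point of $\VV(J)$ with valuation $w$ lies on $X$ and on every $\VV(f_i(t))$, so $w$ is in all the tropical varieties on the right.) No genericity is needed for this direction.

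For the reverse inclusion I would argue as follows. The set $\trop\langle G\rangle$ is a rational polyhedral complex of pure dimension $r = \dim X$, and each $\trop\langle f_i(t)\rangle$ is a tropical hypersurface of pure codimension $1$, whose combinatorial type is the regular subdivision of $\New(f_i)$ induced by the heights $\omega_{i,\bullet}$. The key device is that the substitution $\omega_{i,\alpha} \mapsto \omega_{i,\alpha} - \langle\alpha, v_i\rangle$ translates $\trop\langle f_i(t)\rangle$ by $v_i \in \QQ^n$ while leaving $\trop\langle G\rangle$ fixed; thus the space of valuation vectors $\omega_{\bullet,\bullet}$ contains, through any base point, a full-dimensional family of mutually independent translates of $r$ fixed tropical hypersurfaces. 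A dimension count over this translate family should show that for generic $\omega_{\bullet,\bullet}$ the intersection $\trop\langle G\rangle \cap \trop\langle f_1(t)\rangle \cap \cdots \cap \trop\langle f_r(t)\rangle$ is finite and transverse, with each of its points lying in the relative interior of a facet of $\trop\langle G\rangle$ and of a facet of each $\trop\langle f_i(t)\rangle$ whose linear spans intersect only in the origin: over-determined tuples of faces have generically empty intersection, and the standing hypothesis that $\degXL{X}{\bL}$ is finite is what forces the Newton polytopes to be in general enough position relative to $\trop\langle G\rangle$ that the remaining, expected-codimension configurations are transverse. Then I would invoke the transverse-intersection (lifting) theorem for tropical varieties \cite[\S3.6]{MaclaganSturmfels}, which says that a transverse intersection point $w$ of $\trop(Y)$ and $\trop(Y')$ satisfies $w \in \trop(Y \cap Y')$, and apply it inductively: $\trop\langle G, f_1(t)\rangle = \trop\langle G\rangle \cap \trop\langle f_1(t)\rangle$, the result is again pure of the expected dimension by the multiplicity part of the theorem and so meets $\trop\langle f_2(t)\rangle$ transversally, and so on; after $r$ steps this yields $\trop\langle G\rangle \cap \bigcap_i \trop\langle f_i(t)\rangle \subseteq \trop(J)$.

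The step I expect to be the main obstacle is the genericity argument --- making rigorous that \emph{generic valuations} (as opposed to generic translates of one fixed hypersurface) force transversality, given that the heights over the exponents of a single $f_i$ are not geometrically free. The translate reparametrization handles the ``moving'' part, but one still has to verify that the locus of bad valuation vectors is a proper closed subset of the whole height space (which follows because it already meets the translate subfamily in a proper subset), and that over-determined and non-transverse face configurations are genuinely excluded --- the latter being precisely where the finiteness of $\degXL{X}{\bL}$ enters. A secondary nuisance is the bookkeeping in the induction: one must check that each intermediate object really is the tropicalization of the corresponding (possibly non-reduced) scheme and stays equidimensional of the predicted dimension, so that transversality with the next hypersurface is again a generic condition.
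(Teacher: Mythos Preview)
Your approach is the paper's: the inclusion $\subseteq$ is formal, and for $\supseteq$ one argues that generic valuations force the intersection on the right to be transverse, after which a lifting lemma for transverse tropical intersections gives equality. The paper compresses all of this into a citation of \cite[Lemma~3.2]{BJSST}; you instead sketch the genericity via the translate reparametrization and cite \cite[\S3.6]{MaclaganSturmfels} for the lifting, which is the same mechanism.

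One correction: the finiteness of $\degXL{X}{\bL}$ plays no role here and the paper does not invoke it. Transversality for generic heights is a purely polyhedral statement about an $r$-dimensional complex $\trop\langle G\rangle$ meeting $r$ tropical hypersurfaces, and your own translate argument already establishes it: the non-transverse configurations cut out a finite union of proper affine subspaces in the translate parameters $(v_1,\dots,v_r)$, hence are avoided generically, and since the bad locus in the full height space $\omega_{\bullet,\bullet}$ is closed and meets the translate subfamily properly, it is nowhere dense there too. If some face configurations have empty intersection for every choice of heights (as can happen when Newton polytopes are degenerate), the asserted equality holds vacuously on those pieces; no algebraic hypothesis on $X$ or the $L_i$ is needed. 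You should drop the appeal to $\degXL{X}{\bL}$ and the inductive ``bookkeeping'' worry largely dissolves with it, since the cited transversality lemmas handle the simultaneous intersection directly rather than one hypersurface at a time.
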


\begin{proof}
The inclusion  $\trop \langle G \cup \{f_1(t),\dots,f_r(t) \} \rangle \subseteq \trop \langle G \rangle \cap \trop \langle f_1(t)\rangle \cap \cdots \cap \trop \langle f_r(t) \rangle$ is always true by the definition of tropical varieties,  but the containment may be strict.  However, when the coefficients of $f_1(t),\dots,f_n(t)$ have sufficiently generic valuations, the intersections are transverse, so the result follows from~\cite[Lemma 3.2]{BJSST}. 
\end{proof}

The multiplicities of the transverse 
intersection points can be computed using integer linear algebra~\cite[formula~(3)]{stableIntersection}.

\begin{figure}[h]
\begin{tabular}{cc}
\begin{minipage}{0.4\textwidth}
\begin{center}
\includegraphics[scale=0.4]{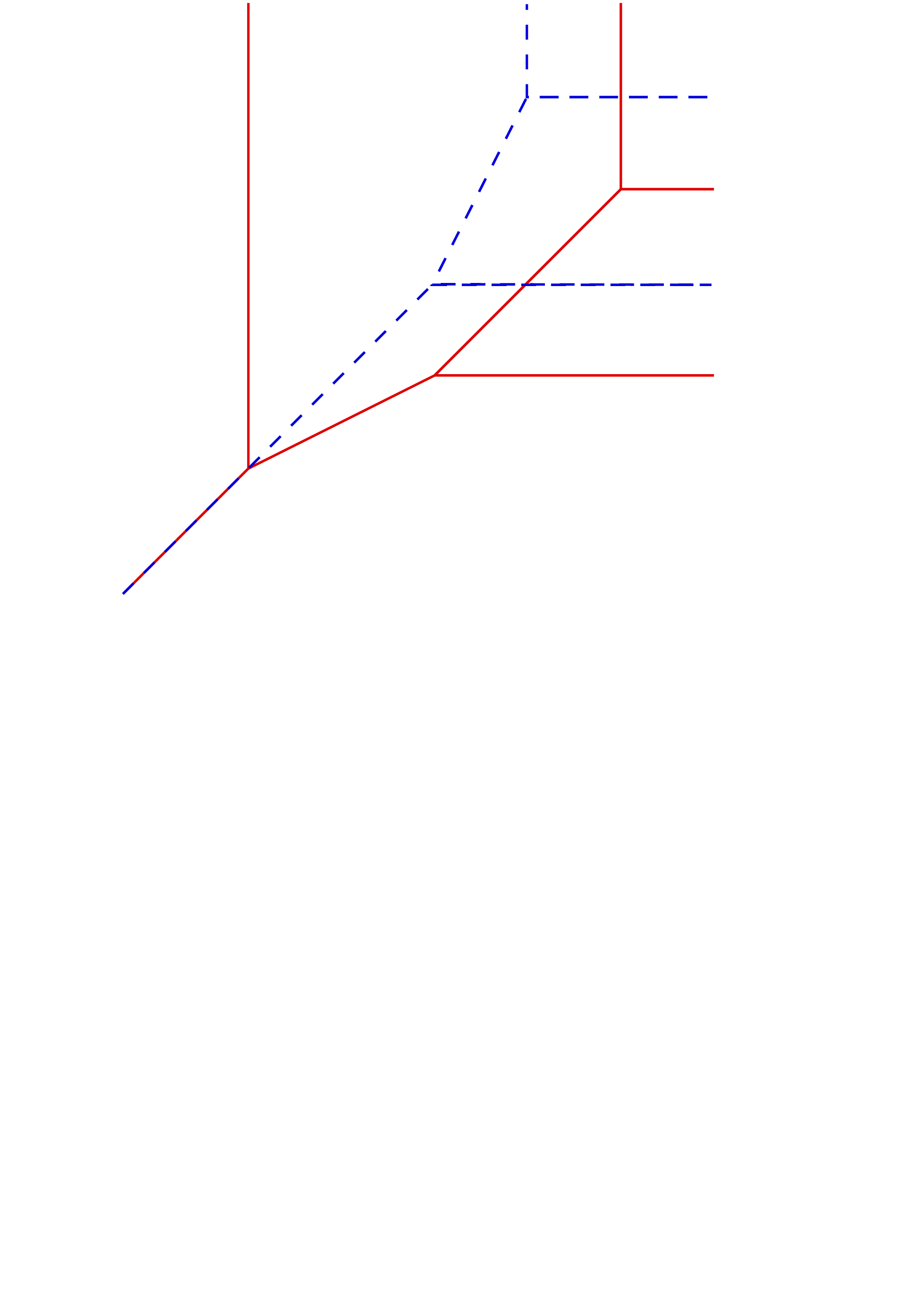}
\vspace{-2.8in}
\end{center}
\end{minipage} &
\begin{minipage}{0.4\textwidth}
\begin{center}
\includegraphics[width=6cm]{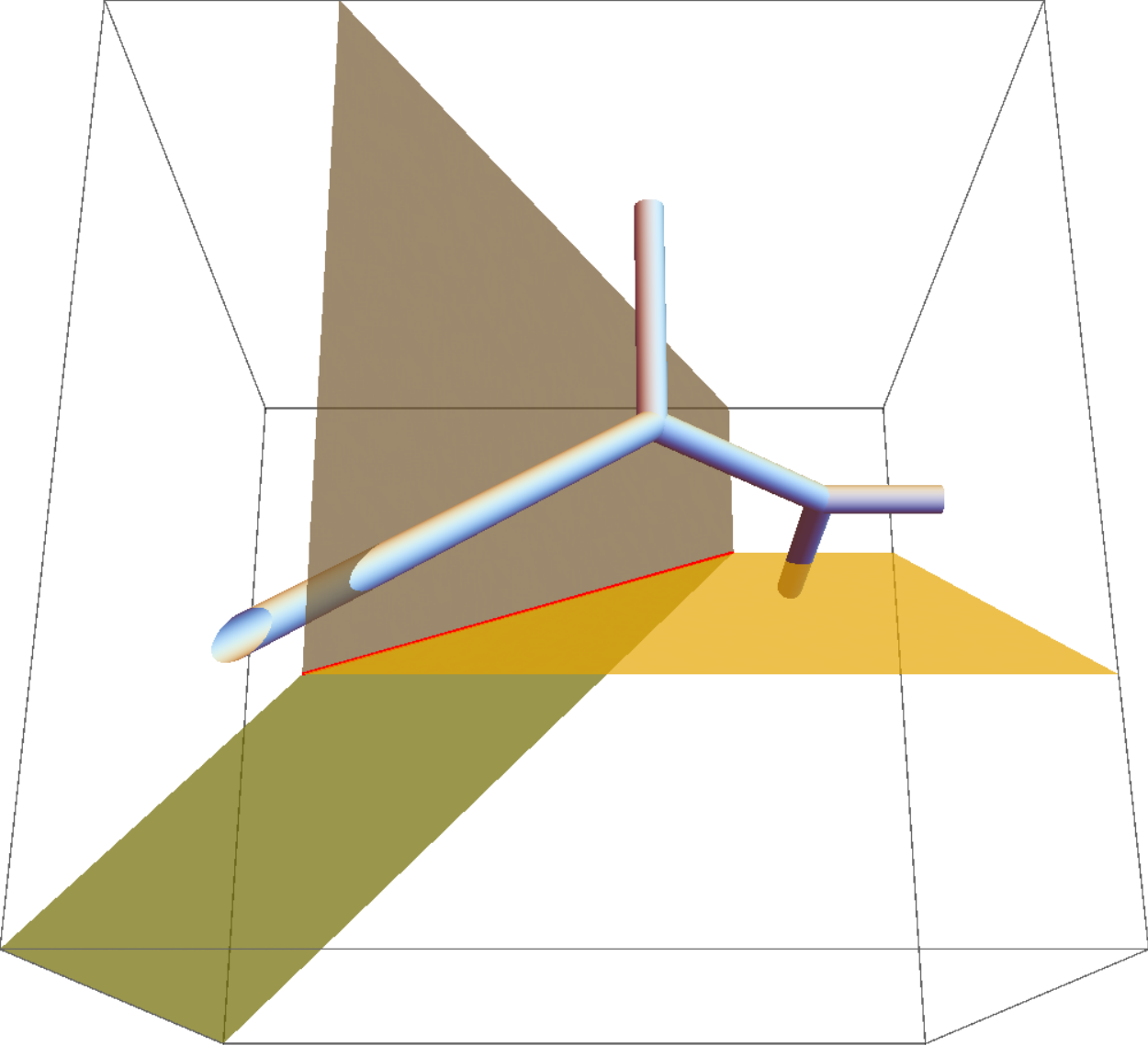}
\end{center}
\end{minipage}
\end{tabular}
\caption{Left: Tropicalizations of the equations for two circles in Example~\ref{example:two-circles} always intersect non-transversely, even when the coefficients $a_1,\dots,a_8$ are generic Puiseux series. 
\newline
Right: The hypersurface $\trop(x^2+y^2-z)$ and a general tropical line, defined by two generic linear equations, intersect transversely at two points.} 
\label{figure:trop-two-circles} 
\end{figure}

For $F_i$'s that are not monomial, that is, for polynomials that are generic with respect to non-monomial supports, the intersection of tropicalizations is \emph{not} necessarily transverse.   They become transverse to $\trop(G')$ after the reformulation (\ref{eq:replace}).  See Figure~\ref{figure:trop-two-circles}.



The valuations of the Puiseux series roots are provided by tropical computations (see Theorem~\ref{thm:fundThm}), while the leading coefficients at a tropical point $\omega \in \trop \langle G \rangle \cap \trop \langle f_1(t)\rangle \cap \dots \cap \trop \langle f_r(t) \rangle$ are given by the root(s) of the \emph{$t$-initial ideal}
$$J_\omega=\tinit_{\omega}\langle G \cup \{f_1(t),\dots,f_r(t)\} \rangle.$$  

The next statement follows from \cite[Lemma~3.1]{stableIntersection}.  
\begin{lemma}\label{lemma:stable}  Let $J_\omega=\tinit_{\omega}\langle G \cup \{f_1(t),\dots,f_r(t)\} \rangle$, then
$$J_\omega = \init_\omega \langle G \rangle + \tinit_\omega \langle f_1(t) \rangle + \cdots + \tinit_\omega \langle f_r(t) \rangle.$$
\end{lemma}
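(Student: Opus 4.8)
The plan is to prove the two inclusions separately; the nontrivial one reduces to the transversality already secured in Lemma~\ref{lem:intersection} together with \cite[Lemma~3.1]{stableIntersection}. First note that, since $G\subseteq\CC[x]$ involves no $t$, one has $\tinit_\omega\langle G\rangle=\init_\omega\langle G\rangle$, which is why the right-hand side uses $\init$ for the $G$-part and $\tinit$ for the $f_i(t)$-parts. Next, the inclusion ``$\supseteq$'' is formal: by definition $\tinit_\omega(I)$ is generated over $\CC[x]$ by the $t$-initial forms of all elements of $I$, and since $\langle G\rangle$ and each $\langle f_i(t)\rangle$ sit inside $\langle G\cup\{f_1(t),\dots,f_r(t)\}\rangle$, the generators of $\init_\omega\langle G\rangle$ and of each $\tinit_\omega\langle f_i(t)\rangle$ already lie in $J_\omega$, and a sum of subideals of $J_\omega$ is a subideal. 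For the principal ideals one moreover has $\tinit_\omega\langle f_i(t)\rangle=\langle\tinit_\omega f_i(t)\rangle$ by multiplicativity of $t$-initial forms, $\tinit_\omega(gp)=\tinit_\omega(g)\tinit_\omega(p)$, and for generic valuations $\tinit_\omega f_i(t)$ is a binomial when $\omega\in\trop\langle f_i(t)\rangle$; so the right-hand side is the ``expected'' initial ideal.

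The inclusion ``$\subseteq$'' is the content of the lemma, and this is where genericity of the valuations is used. By Lemma~\ref{lem:intersection} the hypersurfaces $\trop\langle f_i(t)\rangle$ meet $\trop\langle G\rangle$ transversely, so $\omega$ is an isolated point of the intersection with a well-defined stable-intersection multiplicity. I would then invoke \cite[Lemma~3.1]{stableIntersection}: for ideals whose tropical varieties intersect transversely at $\omega$, the $t$-initial ideal of a sum is the sum of the $t$-initial ideals, which is exactly the assertion after rewriting $\tinit_\omega\langle f_i(t)\rangle=\langle\tinit_\omega f_i(t)\rangle$. An alternative, more self-contained route is a length count: both sides define schemes supported on finitely many torus points, the right-hand ideal is contained in $J_\omega$, and transversality forces the two lengths --- equivalently, the numbers of leading coefficients counted with multiplicity, via Theorem~\ref{thm:fundThm} and \cite[formula~(3)]{stableIntersection} --- to agree, so the schemes, hence the ideals (after checking there are no embedded components off the torus), coincide.

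The main obstacle is precisely the step that rules out ``unexpected'' cancellation: a general element of $J_\omega$ is the $t$-initial form of some $\sum_j a_j g_j+\sum_i b_i f_i(t)$ with $g_j\in G$, and if the leading terms of the summands cancel, this $t$-initial form need not be assembled from the $t$-initial forms of $G$ and the $f_i(t)$. Showing such cancellations do not occur --- equivalently, that the degeneration of $\langle G\cup\{f_i(t)\}\rangle$ to $\init_\omega\langle G\rangle+\sum_i\langle\tinit_\omega f_i(t)\rangle$ is flat --- is exactly what transversality of the tropical intersection provides, and is the part I would delegate to \cite[Lemma~3.1]{stableIntersection} rather than redo the Hilbert-function bookkeeping in place.
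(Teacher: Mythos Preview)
Your proposal is correct and follows the paper's own approach: the paper's entire proof is the one-line citation ``follows from \cite[Lemma~3.1]{stableIntersection},'' and the core of your argument is the same invocation, with the formal inclusion and the identification $\tinit_\omega\langle G\rangle=\init_\omega\langle G\rangle$ spelled out as helpful scaffolding. Your alternative length-count sketch is a reasonable extra, but it is not needed once the cited lemma is in hand.
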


The roots $c\in\VV(J_\omega)$ give us the leading coefficients of Puiseux series roots. The lemma shows that the $J_\omega$ may be generated by simpler polynomials than the original polynomials in $G \cup \{f_1(t),\dots,f_r(t)\}$.
In the original polyhedral homotopies  the ideal $J_\omega$ is binomial and all roots are regular and easy to obtain.

\begin{remark}\label{re:truncations}
In general, distinct Puiseux series roots may share the same leading terms. This happens exactly when a root $c\in \VV(J_\omega)$ is multiple. 

For a multiple root $c$, one needs to find more terms in the \emph{truncated Puiseux series} $$s(t)=ct^\omega + \text{(higher order terms)}, $$ 
so that these are distinct for distinct Puiseux series roots. 
Note that all $s(t)$ and $f_i(t)$'s are polynomials in $\CC[x,t^{1/N}]$ for some positive integer $N$.

The most comprehensive algorithmic treatment of this is can be found in~\cite{JMM}.    
\qed
\end{remark}
\medskip

\begin{algorithm}[Main algorithm]~\label{alg:main}
\begin{algorithmic}[1]
\renewcommand{\algorithmicrequire}{\textbf{Input:}}
\renewcommand{\algorithmicensure}{\textbf{Output:}}
\REQUIRE $G$, a collection of polynomials;\\
$F_i$, $i\inrange{r}$, sets of monomials.  
\ENSURE 
generic $f_i\in L_i = \Span F_i$ for $i\inrange{r}$;\\
approximations $S$ to the points of $\VV(G,f_1,\ldots,f_r)$. 
\smallskip
\hrule
\smallskip
\STATE Compute $\trop(X) = \trop \langle G \rangle$. \label{line:tropX}
\STATE Construct $\ff(t)$, of the form $f_i(t) := \sum_{x^\alpha\in F_i}a_{i,\alpha}t^{\omega_{i,\alpha}} x^{\alpha}$, $i\inrange{r}$.    
\STATE Compute $W = \trop(X) \cap \trop \langle f_1(t)\rangle \cap \cdots \cap \trop \langle f_r(t) \rangle$. \label{line:transverse-intersection}
\STATE $S := \emptyset$
\FOR{every point $\omega \in W$}
  \STATE Construct truncations  $S_\omega$ of Puiseux series roots of $G=\ff(t)=0$. \label{line:truncations}
  \FOR{$s(t) \in S_\omega$}
    \STATE Pick $\varepsilon = \varepsilon(\omega,s(t)) > 0$ close to $0$, let $\tilde x_\varepsilon = s(\varepsilon)$. \label{line:epsilon}
    \STATE  $S :=S \cup \{\tilde x_1\}$,  where $\tilde x_1$ is the output of a homotopy continuation algorithm tracking roots of $G=\ff(t)=0$, $t\in[\varepsilon,1]$, starting with $\tilde x_\varepsilon$.
  \ENDFOR
  \RETURN $f_i := f_i(1)$, $i\inrange{r}$, and $S$.   
\ENDFOR
\smallskip
\hrule
\smallskip
\end{algorithmic}
\end{algorithm}

The following points remained unsaid in the pseudocode:
\begin{itemize}
\item The software Gfan~\cite{gfan} can compute $\trop(X)$ in Line~\ref{line:tropX} when $G$ has rational coefficients.
\item The computation of $\trop(X)$ may involve Gr\"obner bases, while the transverse intersection in Line~\ref{line:transverse-intersection} does not.
\item Jensen's \emph{tropical homotopy continuation}~\cite{jensen2016tropical} may be useful for Line~\ref{line:transverse-intersection}.
\item  Solutions to $J_\omega = \init_\omega \langle G \rangle + \tinit_\omega \langle f_1(t) \rangle + \cdots + \tinit_\omega \langle f_r(t) \rangle$ provide the leading coefficients of $S_\omega$. They give distinct $s(t)$ in Line~\ref{line:truncations}, unless some solutions are multiple. See Remark~\ref{re:truncations}. 
\item The ideal $\init_\omega \langle G\rangle$ is a byproduct of the computation of $\trop(X)$ in Line~\ref{line:tropX}. 
\item A practical way to pin down $\varepsilon = \varepsilon(\omega,s(t))$ in Line~\ref{line:epsilon} is out of the scope of this article; we envision obtaining $\varepsilon$ with heuristics that depend on $\omega$ and $s(t)$. Such $\varepsilon$ exists according to Remark~\ref{re:really-lucky} and the discussion preceding it.
\item We also purposefully omit the discussion of how one tracks a homotopy path in practice. The machinery of \emph{numerical homotopy continuation} is well established with several books (e.g., \cite{Morgan87,AG03} and several more modern ones) devoted to its details.  
\end{itemize}

Algorithm~\ref{alg:main} achieves our {\bf main goal}. The {\bf enumerative goal} is achieved by executing it until Line~\ref{line:transverse-intersection} and then computing the degrees of $J_\omega$. The sum of these degrees is $\degXL{X}{\bL}$. 


\section{Conclusion}

In Setting B we construct an \emph{optimal} homotopy, optimal in the sense that the number of homotopy paths is equal to the number of solutions generically.
Our method combines symbolic, polyhedral, and numerical parts. 
One potential strength is that, for a concrete polynomial system, one can distribute the load between these parts to avoid bottlenecks or to decrease the generic solution count.    

Indeed, polynomials in any given set of equations can be divided into two groups, $G$ and $\ff$, fixing the variety $X=\VV(G)$, on which the roots of $\ff$ are sought. For each polynomial $f_i$ one can decide on which ingredients $F_i$ this polynomial is ``made of''. The list of ingredients may be either inherent to the given problem or be a subject of experimentation. This gives a lot of flexibility.

\appendix
\setcounter{secnumdepth}{0}
\section{Appendix: Base locus}

When we say a statement is true ``for \emph{generic}  $y$ in $Y$'' or ``for \emph{general} $y$ in $Y$'', we mean that it is true for all elements $y$ in some Zariski open dense subset of $Y$, not to be confused with the ``generic point'' of a scheme.  We say that a polynomial $f$ is \emph{generic} or \emph{general} with respect to support $s_1, \dots, s_k$ if $f = c_1 s_1 + \cdots + c_k s_k$ where the coefficients $(c_1,\dots,c_k)$ avoid a Zariski closed proper subset of $\CC^k$ which depends on the context. One should assume that an explicit description of this exceptional set is hard to acquire algorithmically.

In Bernstein--Khovanskii--Kushnirenko theorems and in the original polyhedral homotopies, the goal is to compute (the number of) solutions in the algebraic torus $(\CC^*)^n$, of polynomials that are generic with respect to their monomial supports.  In other words, we choose a monomial basis $\{m_1, m_2,\dots,m_{k_i}\}$ for each linear system $L_i$ and remove the \emph{union} of their hypersurfaces from $X = \CC^n$, obtaining $\CC^n \setminus \bigcup_{j=1}^{k_i} \VV(m_j) = (\CC^*)^n$ if all variables appear in the monomial basis.  
Here we remove the base locus instead, which is the \emph{intersection} of the hypersurfaces.  The following argument shows that these two settings are equivalent.
For generic polynomials the solution set does not depend on the choice of set removed as long as it has smaller dimension and contains the base locus.

\begin{proposition}
\label{prop:baseLocus}
Suppose that  $\VV(f_1, \dots, f_r) \cap X$ is finite for generic  $(f_1,\dots,f_r) \in \bL$. Let $Z \subset \CC^n$ be a variety such that $Z_\bL \subset Z \subset X$ and $\dim Z < \dim X = r$.   Then
\begin{equation}
\label{eqn:boundary}
\VV(f_1,\dots,f_r) \cap (X \setminus Z_\bL) = \VV(f_1, \dots, f_r) \cap (X \setminus Z)
\end{equation}
for generic $(f_1,\dots,f_r) \in \bL$.
\end{proposition}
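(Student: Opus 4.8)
The plan is to reduce \eqref{eqn:boundary} to an emptiness statement and then count dimensions on an incidence variety. Since $Z_\bL \subseteq Z$, one has $X \setminus Z \subseteq X \setminus Z_\bL$, so the inclusion $\supseteq$ in \eqref{eqn:boundary} holds for \emph{every} $(f_1,\dots,f_r) \in \bL$; hence it suffices to show that for generic $(f_1,\dots,f_r)\in\bL$ no point of $Z \setminus Z_\bL$ is a common zero of the $f_i$, i.e.
\[
\VV(f_1,\dots,f_r)\cap(Z\setminus Z_\bL)=\emptyset \qquad \text{for generic }(f_1,\dots,f_r)\in\bL .
\]
To this end I would introduce the incidence variety
\[
\Sigma=\{(f_1,\dots,f_r,p)\in \bL\times(Z\setminus Z_\bL): f_i(p)=0 \text{ for all }i\},
\]
which is locally closed in $\bL\times\CC^n$ — the conditions $f_i(p)=0$ are polynomial in the coefficients of the $f_i$ and the coordinates of $p$, and deleting $Z_\bL$ is an open condition — hence a quasi-affine variety, and then analyze its two coordinate projections.

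For the projection $\mathrm{pr}_2\colon\Sigma\to Z\setminus Z_\bL$, fix $p\in Z\setminus Z_\bL$. By definition of the base locus, $p\notin Z_{L_i}=\VV(F_i)=\VV(L_i)$ means some element of $L_i$ is nonzero at $p$, so the evaluation functional $L_i\to\CC$, $g\mapsto g(p)$, is a nonzero linear map, hence surjective, and its kernel is a hyperplane in $L_i$. Therefore the fiber $\mathrm{pr}_2^{-1}(p)=\prod_{i=1}^r\{g\in L_i: g(p)=0\}$ is an affine-linear subspace of $\bL$ of dimension exactly $\dim\bL-r$, uniformly in $p$ (so $\Sigma$ is in fact an affine bundle over $Z\setminus Z_\bL$). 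Bounding the dimension of a variety by the dimension of the base plus the maximal fiber dimension, and using $\dim Z< \dim X=r$,
\[
\dim\Sigma\;\le\;\dim(Z\setminus Z_\bL)+(\dim\bL-r)\;\le\;(r-1)+\dim\bL-r\;=\;\dim\bL-1 .
\]

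Finally I would turn to the projection $\mathrm{pr}_1\colon\Sigma\to\bL$: its image is constructible of dimension at most $\dim\Sigma<\dim\bL$, and since $\bL$ is irreducible its Zariski closure $W:=\overline{\mathrm{pr}_1(\Sigma)}$ is a proper closed subset of $\bL$. For $(f_1,\dots,f_r)\in\bL\setminus W$ the fiber $\mathrm{pr}_1^{-1}(f_1,\dots,f_r)$ is empty, i.e.\ $\VV(f_1,\dots,f_r)\cap(Z\setminus Z_\bL)=\emptyset$, which is what was needed; intersecting $\bL\setminus W$ with the dense open locus where $\VV(f_1,\dots,f_r)\cap X$ is finite gives the conclusion on a Zariski open dense subset of $\bL$. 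The one point that genuinely requires care is the \emph{uniformity} of the fiber dimension of $\mathrm{pr}_2$ — it must be constant, equal to $\dim\bL-r$, not merely bounded above by it — since this is exactly what forces $\dim\Sigma<\dim\bL$ rather than only $\le\dim\bL$; and this uniformity is precisely the content of ``$p\notin Z_\bL$'', namely that off the base locus the evaluation functional on each $L_i$ is surjective. Everything else is the standard fact that a variety cannot dominate an irreducible variety of strictly larger dimension.
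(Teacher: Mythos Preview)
Your proof is correct and reaches the same reduction as the paper---namely, that it suffices to show $\VV(f_1,\dots,f_r)\cap(Z\setminus Z_\bL)=\emptyset$ for generic $(f_1,\dots,f_r)\in\bL$---but the two arguments diverge at the key step. The paper also introduces the incidence variety (there called $\bI_Z$), but only to conclude that the ``good'' set is constructible; it then argues \emph{density} by an iterative application of Krull's Hauptidealsatz: for generic $f_1\in L_1$ the hypersurface $\VV(f_1)$ contains no component of $Z\setminus Z_\bL$, so the dimension drops by one; for generic $f_2\in L_2$ given such an $f_1$ it drops again; after $r>\dim Z$ steps the intersection is empty. Constructible plus dense then gives a Zariski open dense set.

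Your approach replaces this induction with a single global dimension count on $\Sigma$ via the fibers of $\mathrm{pr}_2$, which is slicker and makes the role of the hypothesis $p\notin Z_\bL$ completely transparent: it is precisely what forces every fiber of $\mathrm{pr}_2$ to have codimension exactly $r$ in $\bL$, hence $\dim\Sigma<\dim\bL$. The paper's step-by-step argument is a little more elementary in that it invokes only the principal ideal theorem rather than the fiber-dimension theorem, and it makes visible that each $f_i$ separately contributes one to the codimension; your argument packages all $r$ conditions at once. Both are standard techniques, and neither actually uses the finiteness hypothesis on $\VV(f_1,\dots,f_r)\cap X$ in an essential way---you correctly note that one simply intersects with that open locus at the end.
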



\begin{proof}
The idea is that for generic choices of $(f_1,\dots,f_r) \in \bL$, each $f_i$ cuts down the dimension of $Z \setminus Z_\bL$ by one, and we assumed that $\dim Z < r$, so we should have
\[
\VV(f_1,\dots,f_r) \cap (Z \setminus Z_\bL) = \varnothing,
\]
which implies~\eqref{eqn:boundary}. We will make this precise.

 If $Z = Z_\bL$, then there is nothing to prove.  Suppose $Z \supsetneq Z_\bL$.
Let $U$ be a Zariski open dense subset of $\bL$ such that $\VV(f_1, \dots, f_r) \cap X$ is finite for each  $(f_1,\dots,f_r) \in U$.
Let \[
  W = \{(f_1,\dots,f_r) \in U \mid \VV(f_1,\dots,f_i) \cap (Z \setminus Z_\bL) =\emptyset\}.
\]
We wish to show that $W$ contains a Zariski open dense subset of $\bL$.

Note that $W$ is constructible, since for the incidence variety
\[
\bI_Z = \{(f_1,\dots,f_r, z) \mid z \in Z \setminus Z_\bL \text{ and } f_1(z) = \cdots = f_r(z) = 0  \} \subset \bL \times (Z \setminus Z_\bL),
\]
with  the projection $\pi$ onto $\bL$ we have $W = U \setminus \pi(\bI_Z)$.  

For a point $z \in Z \setminus Z_{L_1}$, the set of $f_1 \in L_1$ satisfying $z \notin \VV(f_1)$ is the complement of a hyperplane in $L_1$.  Since the degrees of the $f_1$'s in $L_1$ are bounded, the condition that $\VV(f_1)$ does not contain any component of $Z \setminus Z_\bL$ can be translated as $f_1$ not vanishing on a finite set of points; hence the set of such $f_1$'s is a Zariski open dense subset of $L_1$.  By Krull's Hauptidealsatz, if $\VV(f_1)$ does not contain any component of $Z \setminus Z_\bL$, then $\VV(f_1) \cap (Z \setminus Z_\bL)$ is either empty or has dimension $\leq \dim(Z) -1$.  

For a fixed $f_1$, by a similar argument, there is a Zariski open dense subset of $f_2 \in L_2$ such that $\VV(f_1, f_2) \cap (Z \setminus Z_\bL)$ has dimension $\leq \dim(Z) -2$, and so on.  This shows that the set $W$ is dense in $\bL$. Since it is constructible, it contains a Zariski  open dense subset.
\end{proof}

\bibliographystyle{amsalpha}
\bibliography{mybib}

\end{document}